%\def\filedate{20 November 2007}
%\def\fileversion{2.1.9}
%
%\NeedsTeXFormat{LaTeX2e}

\documentclass{amsart}
\usepackage{amsmath,amssymb}
\usepackage{hyperref}
\usepackage{xcolor}
\usepackage{soul}
%\usepackage[english]{babel}

%Include your preferred graphics and mathematics packages here,
%using the command \usepackage{}

%The \newtheorem command is used to define theorem-like environments
%that normally REQUIRE A PROOF, for example:
%\newtheorem{theorem}{Theorem}[section] % 1st argument is your name for it
%\newtheorem{lemma}[theorem]{Lemma}     % 2nd argument is what is printed
%\newtheorem{corollary}[theorem]{Corollary}
%\newtheorem{proposition}[theorem]{Proposition}
%To control the numbering sequence of these environments, see
%Lamport's book on LaTeX [2, p. 193].

%The \newnumbered command can be used to define environments or
%independent statements that DO NOT REQUIRE A PROOF. The usual ones are:
%\newnumbered{assertion}{Assertion}    % 1st argument is your name for it
%\newnumbered{conjecture}{Conjecture}  % 2nd argument is what is printed
%\newnumbered{definition}{Definition}
%\newnumbered{hypothesis}{Hypothesis}
%\newnumbered{remark}{Remark}
%\newnumbered{note}{Note}
%\newnumbered{observation}{Observation}
%\newnumbered{problem}{Problem}
%\newnumbered{question}{Question}
%\newnumbered{algorithm}{Algorithm}
%\newnumbered{example}{Example}
%\newunnumbered{notation}{Notation} % This is usually unnumbered
% The numbering sequence of these environments can be controlled in the
% same way as for \newtheorem; see Lamport's book on LaTeX, p. 193.

% The default LMS numbering of equations in long papers is (1.1), (1.2), (2.1), etc.
% In short papers, to change the numbering to (1), (2), etc., 'uncomment' the next line.
% \simpleequations
% Otherwise, use the AMS \numberwithin command.

% TOP MATTER

\newcommand{\Set}[1]{\left\{\, #1 \,\right\}}

\newcommand{\Span}[1]{\langle\, #1 \,\rangle}

\newcommand{\Order}[1]{\lvert #1 \rvert}
\newcommand{\Index}[2]{\lvert #1 : #2 \rvert}
\providecommand{\Size}[1]{\left| #1 \right|}

\DeclareMathOperator{\Frat}{\Phi}
\DeclareMathOperator{\Hom}{Hom}
\DeclareMathOperator{\Irr}{Irr}

\DeclareMathOperator{\dt}{bdt}
\DeclareMathOperator{\bdt}{bdt}
\DeclareMathOperator{\rexp}{\mathit{e}}
\DeclareMathOperator{\br}{\mathit{b}}
\DeclareMathOperator{\cl}{cl}
\DeclareMathOperator{\cn}{\mathit{k}}

\renewcommand{\phi}[0]{\varphi}
\renewcommand{\theta}[0]{\vartheta}
\renewcommand{\epsilon}[0]{\varepsilon}

\newcommand{\N}{\text{$\mathbb{N}$}}
\newcommand{\Z}{\text{$\mathbb{Z}$}}

\newcommand{\C}{\text{$\mathbb{C}$}}

\newcommand{\F}{\text{$\mathbb{F}$}}

%\newcommand{\cl}[0]{\mathrm{cl}}

%\renewcommand{\smallsetminus}{\setminus}

%Theorems and stuff
\theoremstyle{plain}

\newtheorem{dummy}{Dummy}
\numberwithin{dummy}{section}
\numberwithin{equation}{section}

\newtheorem{theorem}[dummy]{Theorem}
\newtheorem{lemma}[dummy]{Lemma}

\newtheorem{proposition}[dummy]{Proposition}
\newtheorem{corollary}[dummy]{Corollary}

\theoremstyle{definition}

\theoremstyle{remark}

\newtheorem{remark}[dummy]{Remark}

\begin{document}

\date{August 9, 2021}

\title%[The breadth-degree type of a finite $p$-group]
{The breadth-degree type of a finite $p$-group}

% \title%[Breadth and character degree type of a finite $p$-group]
% {Breadth and character degree type of a finite $p$-group}

% \author{N. Gavioli, A. Mann and V. Monti}

\author[N. Gavioli]{Norberto Gavioli} \address[Norberto Gavioli]{
  Dipartimento di Ingegneria e Scienze dell'Informazione e Ma\-te\-ma\-ti\-ca\\
  Universit\`a degli Studi dell'Aquila\\
  Via Vetoio\\
  I-67100 Coppito (L'Aquila)\\
  Italy} \email{norberto.gavioli@univaq.it}

\author[A. Mann]{Avinoam Mann} \address[Avinoam Mann]{
  Einstein Institute of Mathematics\\
  The Hebrew University of Jerusalem\\
  Manchester House 205\\
  Givat       Ram.       Jerusalem,      9190401,       Israel       }
\email{avinoam.mann@mail.huji.ac.il} \author[V.  Monti]{Valerio Monti}
\address[Valerio Monti]{
  Dipartimento di Scienza e Alta Tecnologia\\
  Universit\`a degli Studi dell'Insubria\\
  Via Valleggio, 11 \\ I-22100 Como\\
  Italy} \email{valerio.monti@uninsubria.it}

\thanks{The authors are  indebted to Carlo Maria  Scoppola for helpful
  discussions and  suggestions in the  preparation of this  work.  The
  first  and  third  authors  are members  of  GNSAGA--INdAM  and  are
  grateful  to  \emph{Istituto  Nazionale   di  Alta  Matematica  ``F.
    Severi"} (Roma) for support and hospitality during the preparation
  of this paper.  }

% \affiliationone{% in this example, two authors share an institution
%	Norberto Gavioli\\
%	Dipartimento di Ingegneria e Scienze dell'Informazione e Ma\-te\-ma\-ti\-ca\\
%	Universit\`a degli Studi dell'Aquila\\
%	Via Vetoio\\
%	I-67100 Coppito (L'Aquila)\\
%	Italy \email{norberto.gavioli@univaq.it}}
%% Important: Do not put any empty line here.
% \affiliationtwo{% in this example, two authors share an institution
%	Avinoam Mann\\
%	Einstein Institute of Mathematics\\
%	The Hebrew University of Jerusalem\\
%	Manchester House 205\\
%	Givat       Ram.        Jerusalem,       9190401,       Israel
% \email{avinoam.mann@mail.huji.ac.il}}
% \affiliationthree{Valerio Monti\\
%	Dipartimento di Scienza e Alta Tecnologia\\
%	Universit\`a degli Studi dell'Insubria\\
%	Via Valleggio, 11 \\ I-22100 Como\\
%	Italy \email{valerio.monti@uninsubria.it} }

% \address[Norberto Gavioli]%
% {Dipartimento di Ingegneria e Scienze dell'Informazione e Ma\-te\-ma\-ti\-ca\\
% Universit\`a degli Studi dell'Aquila\\
% Via Vetoio\\
% I-67100 Coppito (L'Aquila)\\
% Italy} \email{norberto.gavioli@univaq.it}
%
% \author{Valerio Monti}
%
% \address[Valerio Monti]
% {Dipartimento di Scienza e Alta Tecnologia\\
% Universit\`a degli Studi dell'Insubria\\
% Via Valleggio, 11 \\ I-22100 Como\\
% Italy}
%
% \email{valerio.monti@uninsubria.it}

%
%
%

\keywords{finite $p$-groups, breadth, class number, character degrees,
  representation exponent, isoclinism, stem groups}

\subjclass[2020]{Primary 20D15; Secondary 20C15}

% \classno{20D15 (primary), 20C15 (secondary)}

\begin{abstract}
  % We determine the upper bound $|G|\le p^{\frac{b(3b+4d-1)}{2}}$ for
  % the size  of a stem finite  $p$-group $G$ in terms  of its breadth
  % $b$ and its maximum character degree $p^d$.
  In the  present paper we show  that a stem finite  $p$-group $G$ has
  size                            bounded                           by
  $\min\left(p^{(8d        -        2\log_2d       +        b        -
      4)(b+1)/2},p^{b(3b+4d-1)/2}\right)$ where $b$  is the breadth of
  $G$  and  $p^d$  is  the  maximum character  degree  of  $G$.  As  a
  consequence  there are  only  finitely many  finite stem  $p$-groups
  having breadth $b$ and maximum character degree $p^d$.
\end{abstract}

\maketitle

\thispagestyle{empty}

\section*{Introduction}

The problem  of finding all  finite groups of  a given order  is quite
old. In  the case of finite  $p$-groups (where $p$ is  a prime number)
the task  is particularly  hard as the  number of  isomorphism classes
grows rapidly with  the order.  This particular  difficulty led Philip
Hall, aiming  at achieving a \emph{systematic  classification theory},
as he himself writes in his famous paper \cite{Hall1940}, to introduce
the  concept  of  isoclinism,  an equivalence  relation  among  groups
preserving the commutation structure. A group $T$ such that its center
$Z(T)$  is contained  in the  derived subgroup  $T'$ is  said to  be a
\emph{stem  group}. Philip  Hall showed  that finite  stem groups  are
exactly the ones of minimum order  within an isoclinism class. He also
pointed out  that the ratio  of the number  of conjugacy classes  of a
given size of a  finite group $G$ with respect to the  order of $G$ is
preserved by  isoclinism.  In \cite{Hall1940}  it is also  stated that
the ratio  of the number of  irreducible characters of $G$  of a given
degree with respect to the order of $G$ is invariant under isoclinism.
A proof of this result is given in \cite[Theorem D]{Mann1999}.  Adding
up over all the possible lengths of conjugacy classes (or over all the
possible degrees of  the irreducible characters) we see  that also the
\emph{commutativity degree} $\cn(G)/\Size G$ of  a finite group $G$ is
invariant  under  isoclinism.    This  number  is  known   to  be  the
probability   that  two   randomly   chosen   elements  commute   (see
\cite{Erfanian2007,Guralnick2006,Gustafson1973,Lescot1995,Rusin1979}).

Another invariant under isoclinism  which has been extensively studied
is the  breadth $\br(G)$ of a  finite $p$-group $G$. It  is defined as
the maximum  integer $b$ such that  there exists a conjugacy  class in
$G$ of size $p^b$.  A finite $p$-group  has breadth $1$ if and only if
its derived subgroup has order  $p$. This result is usually attributed
to H-G.~Knoche  \cite{Knoche1951}, although Rolf Brandl  recently made
the authors aware that it  was already known to W.~Burnside \cite[Chap
VIII  \S  99  p.~126,  Ex.~1]{Burnside1955}.  The  breadth  $b$  of  a
$p$-group  $G$  gives   information  on  the  structure   of  $G$:  in
\cite{Vaughan-Lee1974}  M.R.~Vaughan-Lee   showed  that   the  derived
subgroup of $G$ has order bounded by $p^{\frac{b(b+1)}{2}}$.  A famous
conjecture attributed  to Peter~M.~Neumann stated that  the nilpotency
class  $\cl(G)$  of $G$  is  at  most  $\br(G)+1$.  Known  bounds  are
$\cl(G)    \le    \frac{p}{p-1}\br(G)+1$    by    C.~R.~Leedham-Green,
P.~M.~Neumann    and     J.~Wiegold    \cite{Leedham-Green1969}    and
$\cl(G)\le    \frac{5}{3}\br(G)+1$   for    $p=2$   by    M.Cartwright
\cite{Cartwright1987}. Some counterexamples to the original conjecture
have been  provided by  V.~Felsch \cite{Felsch1980} and  by W.~Felsch,
J.~Neubüser  and  W.~Plesken  \cite{Felsch1981}  and  B.~Eick  et  al
\cite{Eick2006}.  A.~Mann proved in \cite{Mann2006} that the subgroup of
a finite $p$-group  generated by the elements  having minimal positive
breadth  has  nilpotency class  at  most  $3$.  Finite  $p$-groups  of
breadth     $2$      and     $3$     have     been      studied     in
\cite{Gavioli1998,Parmeggiani1999}.

The breadth alone,  however, is far from being  a sufficient parameter
in  the  attempt  to  enumerate   the  isoclinism  classes  of  finite
$p$-groups: indeed  in Section~\ref{sec:unbounded_size}, we  see that,
given an integer $b\ge 1$, it is not difficult to construct infinitely
many isoclinism classes  of finite $p$-groups having  the same breadth
$b$.

In  a somewhat  dual fashion  one can  consider the  set $\Irr(G)$  of
irreducible characters  of $G$ and the  \emph{representation exponent}
$\rexp(G)$          defined          by          the          equality
$p^{\rexp(G)}=\max\Set{\chi(1) \mid \chi\in \Irr(G)}$. This is also an
isoclinism invariant.   Trivially $\rexp(G)=0$ if  and only if  $G$ is
abelian. The $p$-groups $G$  with $\rexp(G)=1$ have been characterized
in  \cite{Isaacs1964a}  where  it  is   shown  that  a  necessary  and
sufficient condition for $\rexp(G)$ to be  equal to $1$ is that either
$G$ has a maximal subgroup which  is abelian or that the center $Z(G)$
has index in $G$  at most $p^3$ (both conditions can  hold at the same
time, though). Incidentally  this last condition implies  that $G$ has
breadth  $2$,  whereas,  on  the contrary,  there  are  $p$-groups  of
arbitrarily   large  breadth   with  a   maximal  subgroup   which  is
abelian. The $p$-groups with representation  exponent at most $2$ have
been studied in \cite{Passman1966a}.

In  Section~\ref{sec:unbounded_size} we  show that  the representation
degree  alone  is not  suitable  for  an  enumeration attempt  of  the
isoclinism  classes of  finite $p$-groups  (see \eqref{eq:notsuitable}
below).

S.R.~Blackburn  showed  in  \cite[Theorem 1]{Blackburn1994}  that  the
number  of $p$-groups  of given  order within  an isoclinism  class is
relatively small with  respect to the number of  all finite $p$-groups
of that order, so  that we expect to find ``not  too many" stem groups
within an isoclinism class.

\bigskip

In this  paper we  define the \emph{breadth-degree  type} of  a finite
$p$-group $G$  to be  the pair  $(\br(G),\rexp(G))$.  We  approach the
problem of  giving a bound  for a  stem group of  given breadth-degree
type in two different ways giving two quadratic bounds respectively in
Corollary~\ref{cor:vl} and in Theorem~\ref{thm:main}.

In  Theorem~\ref{thm:main} we  prove that  a stem  $p$-group of  given
breadth-degree     type    $(b,d)$     has     order    bounded     by
$p^{\frac{b(3b+4d-1)}{2}}$ .  It follows  that there are only finitely
many stem $p$-groups of given breadth-degree type.  This suggests that
the breadth-degree type is a parameter to be taken into account in any
enumeration  procedure  (up  to   isoclinism)  of  classes  of  finite
$p$-groups.

% Isoclinism casses  of finite $p$-group of  small breadth-degree type
% are studied in greater detail  in paper in preparation (A. Cupaiolo,
% N.   Gavioli, A.   Morresi Zuccari,  and C.   M.  Scoppola.   Finite
% $p$-groups of small bradth-degree type.  2019).

\section{Notation and preliminaries}
The groups  considered in this  paper will be finite  $p$-groups where
$p$  is  a prime.   We  will  use  standard notation.   In  particular
$\gamma_i(G)$ and $Z_i(G)$ will denote respectively the $i$-th term of
the lower and the upper central series of $G$.  If $x$ and $y$ are two
elements  of a  group  then the  commutator  $x^{-1}y^{-1}xy$ will  be
denoted  by $[x,y]$  and when  $H$  is a  subset of  $G$ the  notation
$[H,x]$ will be used to denote the set $\Set{[h,x] \mid h\in H}$. When
$H\le G$  is a subgroup  and $x\in Z_2(G)$ we  have that $[H,x]$  is a
subgroup of the  center of $G$.  The set of  irreducible characters of
$G$ will be denoted by $\Irr(G)$. The breadth $\br_G(g)$ of an element
$g\in    G$   is    defined    by   $\Size{g^G}=p^{\br_G(g)}$    where
$g^G=\Set{x^{-1}gx \mid x\in G}$ is the  conjugacy class of $g$ in $G$
and $\br(G)=\max \Set{\br_G(g) \mid g\in G}$ is said to be the breadth
of the  group $G$. The  subscript is often  omitted where there  is no
ambiguity and we will write $\br(g)$ to mean $\br_G(g)$.  We denote by
$B_i(G)$ the subgroup  of $G$ generated by the  elements whose breadth
is  at most  $i$.  The  representation exponent  $\rexp(G)$ of  $G$ is
defined by $p^{\rexp(G)}=\max\Set{\chi(1)  \mid \chi\in \Irr(G)}$. The
number of  the conjugacy classes  of $G$  is denoted by  $\cn(G)$.  We
shall  say  that  a  finite  $p$-group  $G$  has  breadth-degree  type
$\bdt(G)=(b,d)$ if $b=\br(G)$
% is the breadth of $G$
and $d=\rexp(G)$.
% is the representation exponent of $G$.
% In this case we shall write $\dt(G)=(b,d)$.

We remind the reader that two finite groups $G$ and $H$ are said to be
isoclinic if  there are  isomorphims $\alpha\colon G/Z(G)  \to H/Z(H)$
and $\beta\colon G'  \to H'$ such that  $\beta([g,g'])=[h,h']$ for all
$(g,g')\in  G\times  G$ and  $(h,h')  \in  H\times H$  satisfying  the
conditions                  $hZ(H)=\alpha(gZ(G))$                  and
$h'Z(H)=\alpha(g'Z(G))$. Isoclinism  is an equivalence  relation among
finite groups.  A group $T$ is said to be stem if its center $Z(T)$ is
contained in the derived subgroup $T'$ and stem groups are exactly the
groups having minimum order in  some isoclinism class.  In the present
paper we shall need  to compare the order of a  $p$-group $G$ with the
order of  stem group  in the  isoclinism class  of $G$:  the following
remark will be useful to this purpose.

\begin{remark} \label{rem:isoclinicgroupsizes}  Let \(G\) be  a finite
  group.                                                         Since
  \(\Size{Z(G)\cap         G'}=\Size{G'}/\Size{G'Z(G)/Z(G)}=\Size{G'}/
  \Size{\left(G/Z(G)\right)'}\)  we see  that $\Size{Z(G)\cap  G'}$ is
  invariant            under           isoclinism.             Clearly
  \(\Size{G}\ge   \Index{G}{Z(G)}\cdot\Size{Z(G)\cap  G'}\)   and  the
  second member is invariant by isoclinism. Equality holds if and only
  if \(Z(G)\le G'\), that is, \(G\) is a stem group. Thus, stem groups
  are  precisely those  of minimum  order in  their isoclinism  class,
  hence  $ \Index{G}{Z(G)}\cdot\Size{Z(G)\cap  G'}$ is  the size  of a
  stem group $T$ in the isoclinism class of $G$.  In particular
  \begin{equation*}%\label{eq:sizeofisoclinicG}
    \begin{split}
      \Size G &= \Index{G}{Z(G)} \cdot \Index{Z(G)}{(Z(G)\cap G')}\cdot \Size{Z(G)\cap G'}  \\
      &= \Size T\cdot \Index{Z(G)}{(Z(G)\cap G')}.
    \end{split}
  \end{equation*}
  See also \cite[Corollary 29.4]{Berkovich2008}.
\end{remark}

% \begin{definition}
%   \label{def:degree_type}
%   We shall say  that a finite $p$-group $G$  has breadth-degree type
%   $(b,d)$ if $b=\br(G)$ is the breadth of $G$ and if $d=\rexp(G)$ is
%   the representation exponent  of $G$.  In this case  we shall write
%   $\dt(G)=(b,d)$.
% \end{definition}

It  was  already  known  to  Philip Hall  that  the  breadth  and  the
representation  degree   of  a   finite  group  are   invariant  under
isoclinism:  two isoclinic  $p$-groups  have  the same  breadth-degree
type.

We define the size $\sigma(b,d)$ of the breadth-degree type $(b,d)$ as
\begin{equation*}
  % \sigma(b,d)=\sup\Set{|G| \mid G \text{ stem $p$-group of
  % breadth-degree type $(b',d')$ with $b'\le b$ and $d'\le d$}}.
  \sigma(b,d)=\sup\Set{|G| \mid G \text{ stem $p$-group, $\dt(G)=(b',d')$ with $b'\le b$ and $d'\le d$}}.
\end{equation*}

We will  use, without  further mention,  the fact that  if \(G\)  is a
finite \(p\)-group  with $\dt(G)=(b,d)$ and  \(H\) is a quotient  or a
subgroup of \(G\) with $\dt(H)=(b',d')$, then $b'\le b$ and $d'\le d$.

\section{Bounding  the size  of  stem groups  of fixed  breadth-degree
  type}

We start with a lemma.
\begin{lemma}\label{lem:indexofcentre}
  Let the  index of  a subgroup  $H$ of a  group $G$  be finite  and a
  product of $k$ primes (counting multiplicities). If $G$ is generated
  by  $H$ and  a subset  $S$,  then it  is  generated by  $H$ and  $k$
  elements of $S$.
\end{lemma}
\begin{proof}
  If $H \neq G$, we can find  $x\in S$ such that $x\notin H$, and then
  $ \langle H, x\rangle$ contains $H$ properly. Continuing in the same
  way, we construct a  series $H = H_0 < H_1 < \dots  < H_n = G$, such
  that for each $i$ we have $H_i = \langle H_{i-1}, x_i\rangle $, with
  $x_i \in  S$. Then $G  = \langle H, x_1,  \dots , x_n\rangle  $, and
  $n \le k$.
\end{proof}

\begin{theorem}\label{thm:indexofcentre}
  Let the finite $p$-group $G$ be  generated by elements of breadth at
  most   $b$,   and   have    representation   exponent   $d$.    Then
  $\Index{G}{Z(G)} \le p^{(4d - \log_2d - 2)(b+1)}.$
\end{theorem}
\begin{proof}
  By  \cite[Theorem A]{Isaacs1965},  $G$  contains  a maximal  abelian
  subgroup $H$ such  that $|G:H|$ divides $ p^{4d -  \log_2d - 2}$. By
  Lemma~\ref{lem:indexofcentre}, $G$  is generated by $H$  and at most
  $4d - \log_2d  - 2$ elements $x_1,  \dots , x_n$ of  breadth at most
  $b$.   Then $Z(G)  =  \cap_i  C_G(x_i) \cap  H$  has  index at  most
  $\Index{G}{H}\cdot \Pi_i \Index{G}{C_G(x_i)} \le  p^{(4d - \log_2d -
    2)(b+1)}$.
\end{proof}

\begin{corollary}\label{cor:wieg}
  Under  the  assumptions of  Theorem~\ref{thm:indexofcentre},  assume
  also that  $G$ is a stem  group.  Then $|G| \le  p^{n(n+1)/2}$ where
  $n=(4d - \log_2 d - 2)(b+1)$.
\end{corollary}
\begin{proof}
  By   \cite[Theorem~2.1]{Wiegold1965}  a   $p$-group  $X$   in  which
  $|X:Z(X)| = p^z $ satisfies  the inequality $|X'| \le p^{z(z-1)/2}$.
  In   our   case    $z   \le   n$   and   $Z(G)    \le   G'$,   hence
  $|G| \le \Index{G}{Z(G)}\Size{G'} \le p^{n(n+1)/2}$.
\end{proof}

\begin{corollary}\label{cor:vl}
  If  $G$ is  a stem  $p$-group of  breadth-degree type  $(b,d)$, then
  $\Size{G} \le  p^{(8d - 2\log_2d +  b - 4)(b+1)/2}$. In  other words
  $\sigma(b,d)\le p^{(8d - 2\log_2d + b - 4)(b+1)/2}$.
\end{corollary}
\begin{proof}
  The proof  mimics the  one of  the previous  corollary by  using the
  bound $\Size{G'} \le p^{b(b+1)/2}$ in \cite{Vaughan-Lee1974}.
\end{proof}
Note  that Corollary~\ref{cor:vl}  requires that  $G$ has  breadth $b$
whereas Corollary~\ref{cor:wieg}  has the  weaker hypothesis  that $G$
can be generated by elements of breadth at most $b$.

% \proclaim {Corollary 2} If $G$ is a stem $p$-group of breadth-degree
% type                          $(b,d)$,                          then
% $|G| \le p^{(8d - 2\log_2d + b - 4)(b+1)/2}$.\endproclaim
%
% To   see  this,   replace   the  previous   bound   for  $|G'|$   by
% M.R.Vaughan-Lee's bound $|G'| \le p^{b(b+1)/2}$ [VL].

\section{A different bound}

In this  section we provide  a bound for $\sigma(b,d)$  different from
the one  in Corollary~\ref{cor:vl}.  This  is done by finding  a group
$K/L$, where $L \triangleleft K \le G$, such that $b(K/L) < b(G)$, and
applying induction.

\begin{lemma}\label{lem:genbound_first_reduction}
  Let    $G$    be    a     non-abelian    stem   \(p\)-group    and    let
  \(b^{*}=\min_{g\in  G\setminus Z(G)}\br_G(g).\)  The group $G$  has a
  stem quotient of  order $\frac{\Order{G}}{p^{b^{*}-1}}$ whose second
  centre contains an element of breadth 1.
\end{lemma}

\begin{proof}
  By  \cite   [Theorem  2]{longobardi1999},  we  can   choose  $g$  in
  $Z_2(G)\setminus  Z(G)$  of  breadth  $b^{*}$.  Thus  $[g,G]$  is  a
  subgroup of $Z(G)$ of size $p^{b^{*}}$ and if we take a subgroup $N$
  of  $[g,G]$  of  size  $p^{b^{*}-1}$, the  quotient  $G/N$  has  the
  requested  size and  $gN$  is an  element of  the  second center  of
  breadth 1. To prove that $G/N$ is  stem, note that if $xN$ is in the
  centre of $G/N$, then $[x,G]\le  N$ so that $\br_{G}(x)<b^*$. By the
  minimality of $b^*$, we find that $x  \in Z(G)$. Thus $x \in G'$ and
  $xN \in (G/N)'$.
\end{proof}

\begin{lemma}\label{2nd_lemma_abelian_breadth_1}
  Let  $G$ be  a finite  $p$-group of  breadth $b$  and let  $N$ be  a
  minimal  normal  subgroup.   Assume  that there  exists  an  element
  $g\in G$  such that $[g,G]=N$. If  $M=C_G(g)$ then $M$ is  a maximal
  subgroup  of   $G$  containing   $N$  and   $\br(M/N)<b$.   Moreover
  $\br_{G/N}(yN)<\br_G(y)\le  b$ for  every $y  \in G\setminus  M$ and
  $[h,G]\supseteq N$ if $\br_G(h)=b$ and $C_G(h)\nleq M$.
%
  % Let $G$ be a finite $p$-group, $N$ a minimal normal subgroup,
  % $g\in C_G(G/N)\setminus Z(G)$ and $M=C_G(g)$. The subgroup $M$ is
  % a maximal subgroup of $G$ and if $x \in G$ has maximal breadth
  % $\br(x)=\br(G)$ then either $x^G \supseteq xN$ or
  % $x^G \cap xN=\Set x$, in which second case $x\in C_G(x)\le M$ and
  % $\br_M(x)=\br_G(x)-1$.  As a consequence $\br(M/N) \le \br(G)-1$
  % where equality holds if $\br(M)=\br(G)$.
%
\end{lemma}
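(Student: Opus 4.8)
The plan is to first read off the rigid structure that $N$ and $g$ impose, and then treat the four assertions in the order: maximality of $M$, the easy statement for $y\notin M$, the inclusion $[x,G]\supseteq N$ (the real obstacle), and finally $\br(M/N)<b$. Since $N$ is a minimal normal subgroup of a $p$-group it is central of order $p$, so $z\mapsto[g,z]$ is a homomorphism $G\to N$ (centrality of the image gives $[g,z_1z_2]=[g,z_2][g,z_1]$) with kernel $C_G(g)=M$ and image $[g,G]=N$. This single observation yields $\Index{G}{M}=p$, hence $M$ is maximal, together with $N\le Z(G)\le M$ and $g\in Z(M)$. The same centrality shows that for \emph{any} $w\in G$ the map $z\mapsto[w,z]$ restricts to a homomorphism on $D_w:=\Set{z\in G \mid [w,z]\in N}$ with kernel $C_G(w)$, so that $\br_{G/N}(wN)=\br_G(w)$, unless some commutator $[w,z]$ lies in $N\setminus\Set{1}$, in which case $\br_{G/N}(wN)=\br_G(w)-1$. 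I will also use that for $m\in M$ one has $C_{M/N}(mN)=(M\cap D_m)/N$, whence $\br_{M/N}(mN)\le\br_{G/N}(mN)$ and $\br_{M/N}(mN)\le\br_M(m)$.

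For the statement about $y\in G\setminus M$ I would simply take $z=g$: since $y$ is outside the kernel $M$, $[g,y]\ne1$, so $[y,g]=[g,y]^{-1}\in N\setminus\Set{1}$. The dichotomy above then forces the drop, giving $\br_{G/N}(yN)=\br_G(y)-1\le b-1<b$.

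The inclusion $[x,G]\supseteq N$ is where the work lies. If $x\notin M$ then $[x,g]\in N\setminus\Set{1}$ and we are done, so I would assume $x\in M$, $\br_G(x)=b$ and $C_G(x)\not\le M$, and aim to produce a single commutator of $x$ in $N\setminus\Set{1}$. The device is the element $gx$: because $[g,z]\in N\le Z(G)$ one computes $[gx,z]=[g,z][x,z]$, so $z$ centralises $gx$ exactly when $[x,z]=[g,z]^{-1}\in N$. Were no commutator of $x$ to meet $N\setminus\Set{1}$, this would force $[x,z]=1$ and then $[g,z]=1$, i.e.\ $z\in C_G(x)\cap M$; conversely $C_G(x)\cap M$ clearly centralises $gx$. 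Hence $C_G(gx)=C_G(x)\cap M$, and since $C_G(x)\not\le M$ with $M$ maximal we have $\Index{C_G(x)}{C_G(x)\cap M}=p$, so $\Index{G}{C_G(gx)}=p\cdot p^{b}=p^{b+1}$, giving $\br_G(gx)=b+1>\br(G)$, a contradiction. I expect this to be the only genuine obstacle: everything hinges on noticing that multiplying by the element $g$, central in $M$, converts the failure of the inclusion into an element of impossibly large breadth.

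Finally, for $\br(M/N)<b$ I would fix $m\in M$ and split on $C_G(m)$. If $C_G(m)\le M$ then $C_M(m)=C_G(m)$ has index $\Index{G}{C_G(m)}/p\le p^{b-1}$ in $M$, so $\br_{M/N}(mN)\le\br_M(m)\le b-1$. If $C_G(m)\not\le M$ but $\br_G(m)<b$ then $\br_M(m)=\br_G(m)\le b-1$ and the same bound applies. In the remaining case $C_G(m)\not\le M$ and $\br_G(m)=b$, the element $m$ meets exactly the hypotheses of the previous paragraph, so $\br_{G/N}(mN)=b-1$ and therefore $\br_{M/N}(mN)\le\br_{G/N}(mN)=b-1$. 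In all cases $\br_{M/N}(mN)\le b-1$, so $\br(M/N)<b$, completing the proof.
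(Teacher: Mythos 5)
Your proof is correct and rests on the same key device as the paper's: passing from $x$ to the element $gx$, exploiting that $g\in Z(M)$ and that all commutators of $g$ lie in the central subgroup $N$ of order $p$. The only difference is organizational --- you establish $[x,G]\supseteq N$ first, via the exact computation $C_G(gx)=C_G(x)\cap M$ (which under the contrary assumption would force $\br_G(gx)=b+1>\br(G)$), and then deduce $\br(M/N)<b$ from it, whereas the paper argues via the containment $(xg)^G\supseteq x^G g$ to get $\br_{G/N}(xN)\le \br_{G/N}(xgN)<b$ and obtains the inclusion $[x,G]\supseteq N$ as a corollary; both routes are sound.
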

\begin{proof}
  Clearly $N$  is central and $\br(g)=1$,  so that $M$ is  maximal and
  contains $N$. Since $N$ is central,  $[y,G] \cap N$ is a subgroup of
  $N$ for  every $y \in  G$. Provided that  $[y,G] \cap N\ne  1$, this
  implies     that    $[y,G]\supseteq     N$    and,     consequently,
  $\br_{G/N}(yN)<\br_G(y)\le   b$.   Note   that  this   happens  when
  $y \in G\setminus M$ for $[y,g]$ is a non-trivial element of $N$.

  Let now  $x$ be an  element of $M$ and  let $C=C_G(x)$. If  $C\le M$
  then $\br_{M/N}(xN)\le \br_M(x)<\br_G(x)\le  b$. Otherwise $G=CM$ so
  that $x^G=x^M$. Thus $(xg)^G \supseteq (xg)^M=x^Mg=x^Gg$ and working
  modulo   $N$  we   get  $\br_{G/N}(xgN)\ge   \br_{G/N}(xN)$.   Given
  $c \in C\setminus  M$, we have that $[xg,c]=[g,c]$  is a non-trivial
  element      of      $[xg,G]\cap      N$      and      as      above
  $\br_{M/N}(xN)\le  \br_{G/N}(xN)\le \br_{G/N}(xgN)<\br_G(xg)\le  b$.
  This shows that \(\br(M/N)< b\).
  
  % Note that $[x,G]\supseteq N$ if
  Suppose that \(h\in  G\) and that $\br_G(h)=b$  and $C_G(h)\nleq M$.
  If $h\notin M$, we have already  noted that $[h,G] \supseteq N$.  If
  $h\in M$,  then $G = C_G(h)M$  implies that $[h,G] =  [h,M]$ so that
  $b_M(h)  = b$.  Since  $b_{M/N}(hN) \le  b(M/N) <  b$,  and this  is
  possible only if $[h,M] \supseteq N$.
%
%
%
  % Note that $M$ is a maximal subgroup of $G$.  Suppose that
  % $x^G \not\supseteq xN$, i.e.\ $x^G \cap xN=\Set x$, and assume by
  % way of contradiction that the centralizer $C=C_G(x)$ of $x$ in $G$
  % is not contained in $M$.  We have $G=CM$ which implies that
  % $x^G=x^M$. If $c\in C\setminus M$ then $1\ne [g,c]\in N$, so that
  % $(gx)^G \supseteq g(x^M) \cup \Set{gx[g,c]} \supsetneq
  % g(x^M)=g(x^G)$.
  % In particular $\Size{(gx)^G} > \Size{g(x^G)} = p^{\br(G)}$ which
  % is impossible. It follows that $C_G(x)\le M$ as claimed.
\end{proof}

Part of  the proof  of item~\ref{lem:reduction_to_maximal_itemtwo_new}
of     the     following     lemma    has     been     inspired     by
\cite[Proposition~4.1]{Isaacs1964}.

\begin{lemma}  \label{lem:reduction_to_maximal_new}   Let  $G$   be  a
  non-abelian   $p$-group  of   breadth-degree   type  $(b,d)$.    Let
  $M\lessdot  G$   be  a  maximal  subgroup   containing  $Z(G)$,  let
  $N\le  Z(G)$  be of  order  $\Size  N=p$  and set  $C=C_G(G/N)$  and
  $D=C_G(C)$.
  \begin{enumerate}
  \item \label{lem:reduction_to_maximal_itemone_new} If  $x$ is chosen
    of minimal  breadth $t=\br_G(x)$  in the  set $G\setminus  M$ then
    $\Index{Z(M)}{Z(G)}\le                   p^t$                  and
    $\Index{Z(M)M'}{M'}\le   p^{2t}    \cdot\Index{Z(G)G'}{G'}$.    In
    particular $\Index{Z(M)M'}{M'}\le p^{2t}$ when $G$ is stem.
  \item
    \label{lem:reduction_to_maximal_itemtwo_new} %If $G$ is stem then
    $\Index{C}{Z(G)}=\Index{G}{D}\le      \chi(1)^2$      for      all
    $\chi\in \Irr(G)$ such that $N\nleq \ker\chi$.
  \item \label{lem:reduction_to_maximal_itemthree_new}
    If %$G$ is stem and
    $\Index{G}{D}=  \chi(1)^2$ for  some $\chi\in  \Irr(G)$ such  that
    $N\nleq \ker\chi$  then $C\cap D=Z(G)$, $N\cap  D'=1$, $G'\le ND'$
    and  $G=CD$ is  a  central product.   If  $\chi(1)>1$ then  $C'\ne
    1$. Moreover if  $G$ is stem then $D/N$ is  stem, $D$ is isoclinic
    to $D/N$ and, provided that $\chi(1) > 1$, we have also $b(D)<b$.
  \item  \label{lem:reduction_to_maximal_itemfour_new} If  $C$ is  not
    abelian then there  exists $g\in Z_2(G)$ such  that $\br(g)=1$ and
    $\min \Set{b(x)\mid x\in G\setminus C_G(g)}= 1$.
  \item   \label{lem:reduction_to_maximal_itemfive_new}   If  $C$   is
    abelian              then             $b(G/N)=b-1$              or
    $\Index{G}{D}= \Index{C}{Z(G)}\le p^{\min\{b,2d-1\}}$.
  \end{enumerate}
  In      particular      $\Index{Z(M)M'}{M'}\le      p^{2b}$      and
  $\Index{C}{Z(G)}=\Index{G}{D}\le p^{2d}$ when $G$ is stem.
\end{lemma}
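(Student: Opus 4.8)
The plan rests on first recording the structural properties of $C$ and $D$. Since $C=C_G(G/N)$ is the full preimage of $Z(G/N)$ and $N\le Z(G)$ has order $p$, every $c\in C$ satisfies $[c,G]\le N$; hence $C'\le N$, $C\le Z_2(G)$, and for fixed $c$ the map $g\mapsto[c,g]$ is a homomorphism $G\to N$ with kernel $C_G(c)$, so $\br_G(c)\le 1$. I would then introduce the commutator pairing
\[
 C/Z(G)\times G/D\longrightarrow N,\qquad (cZ(G),gD)\mapsto [c,g],
\]
which is well defined and biadditive (using $[c,g]\in N\le Z(G)$), and whose left and right radicals are trivial by the definitions of $Z(G)$ and $D=C_G(C)$. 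A non-degenerate pairing into a group of order $p$ forces both quotients to be elementary abelian of the same rank; this yields at once that $G/D$ and $C/Z(G)$ are abelian and that $\Index{C}{Z(G)}=\Index{G}{D}$, the first assertion of item~\eqref{lem:reduction_to_maximal_itemtwo_new}.

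For item~\eqref{lem:reduction_to_maximal_itemone_new} I would use that $Z(M)$ is normal in $G$ (characteristic in $M\normal G$) and that $C_{Z(M)}(x)=Z(G)$, since an element of $Z(M)$ centralising $x$ centralises $G=\Span{M,x}$. As $Z(M)$ is abelian and $x$-invariant, $z\mapsto[z,x]$ is a homomorphism $Z(M)\to Z(M)$ with kernel $Z(G)$, so $\Index{Z(M)}{Z(G)}=\Size{[Z(M),x]}\le\Size{x^G}=p^{t}$. To pass to $M'$ I would first show $\Index{G'}{M'}\le p^{t}$: working in $G/M'$, where $M/M'$ is abelian and $x^{p}\in M$, the derived subgroup $G'/M'$ equals the image of the homomorphism $\bar m\mapsto[\bar m,\bar x]$, whose size is that of $\bar x^{\,\bar M}\subseteq\bar x^{\,\bar G}$, hence at most $p^{t}$. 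Combining $\Index{Z(M)}{Z(G)}\le p^{t}$, $\Index{G'}{M'}\le p^{t}$ and the elementary index identities gives $\Index{Z(M)M'}{M'}\le p^{2t}\cdot\Index{Z(G)G'}{G'}$, and $Z(G)\le G'$ in the stem case removes the last factor.

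The technical heart is item~\eqref{lem:reduction_to_maximal_itemtwo_new}. Fix $\chi$ over the faithful central character $\nu$ of $N$, let $\omega$ be its central character on $Z(G)$, and let $\theta\in\Irr(C)$ be a constituent of $\chi|_C$. Because $C'\le N$ is central of order $p$, every character in $\Irr(C\mid\omega)$ is fully ramified over $Z(C)$, with $\theta(1)^2=\Index{C}{Z(C)}$, and there are exactly $\Index{Z(C)}{Z(G)}$ of them. The key observation is that the $G$-orbit of $\theta$ has size $\Index{G}{C_G(Z(C))}=\Index{Z(C)}{Z(G)}$ (the last equality from the pairing applied to $Z(C)$, as $Z(G)\le Z(C)$), so the orbit exhausts $\Irr(C\mid\omega)$; Clifford theory then gives the exact value $\chi(1)=e\,\Index{Z(C)}{Z(G)}\,\theta(1)$, whence
\[
 \chi(1)^2=e^{2}\,\Index{Z(C)}{Z(G)}\,\Index{C}{Z(G)}\ge\Index{C}{Z(G)}=\Index{G}{D}.
\]
This is item~\eqref{lem:reduction_to_maximal_itemtwo_new}, and it makes item~\eqref{lem:reduction_to_maximal_itemthree_new} nearly immediate: equality forces $e=1$ and $Z(C)=Z(G)$, so $C\cap D=Z(C)=Z(G)$ and $\Index{G}{D}=\Index{C}{Z(C)}=\Index{CD}{D}$ gives the central product $G=CD$. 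Factoring $\chi=\alpha\beta$ across it (with $\beta$ linear, since $\chi(1)=\alpha(1)$) yields $N\cap D'=1$, $G'=ND'$, and $C'\ne1$ when $\chi(1)>1$; then $N\cap D'=1$ makes $D$ isoclinic to $D/N$, $Z(D)=Z(G)\le G'=ND'$ forces $D/N$ stem, and additivity of breadth in the central product together with $C'\ne1$ gives $b(D)<b$.

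Finally, since $C\le Z_2(G)$ consists of elements of breadth at most $1$, the dichotomy of items~\eqref{lem:reduction_to_maximal_itemfour_new}--\eqref{lem:reduction_to_maximal_itemfive_new} is natural. If $C$ is non-abelian I would pick $g\in C$ with $\br(g)=1$ and any $x\in C$ with $[x,g]\ne1$: then $g\in Z_2(G)$, and $x\in C\setminus C_G(g)$ is non-central with $\br(x)\le1$, so the minimum breadth outside $C_G(g)$ is $1$, giving item~\eqref{lem:reduction_to_maximal_itemfour_new}. If $C$ is abelian then $\theta$ is linear and the displayed formula reads $\chi(1)=e\,\Index{G}{D}$, so $\Index{G}{D}\le\chi(1)\le p^{d}\le p^{2d-1}$; it remains to secure the bound $p^{b}$. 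Here I would invoke Lemma~\ref{2nd_lemma_abelian_breadth_1}: if $b(G/N)\ne b-1$ then $b(G/N)=b$, so some $y$ has $\br_{G/N}(yN)=b$, whence $\br_G(y)=b$ and $[y,G]\cap N=1$; applying the last clause of Lemma~\ref{2nd_lemma_abelian_breadth_1} with each $c\in C$ satisfying $[c,G]=N$ forces $C_G(y)\le C_G(c)$, and intersecting over all such $c$ gives $C_G(y)\le D$, hence $\Index{G}{D}\le\Size{y^G}=p^{b}$. The two estimates combine to $\Index{G}{D}\le p^{\min\{b,2d-1\}}$, and the concluding ``in particular'' follows by taking $t\le b$ in item~\eqref{lem:reduction_to_maximal_itemone_new} and $\chi(1)\le p^{d}$ in item~\eqref{lem:reduction_to_maximal_itemtwo_new}. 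I expect the orbit count underlying item~\eqref{lem:reduction_to_maximal_itemtwo_new}, and the extraction of the central product in the equality case, to be the main obstacle, with the breadth bookkeeping of item~\eqref{lem:reduction_to_maximal_itemfive_new} a close second.
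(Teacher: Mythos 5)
Your proposal is correct, and while items \eqref{lem:reduction_to_maximal_itemone_new} and \eqref{lem:reduction_to_maximal_itemfour_new} and the pairing argument for $\Index{C}{Z(G)}=\Index{G}{D}$ coincide with the paper's, the character-theoretic core is handled by a genuinely different route. The paper proves $\Index{G}{D}\le\chi(1)^2$ elementarily: any $y\notin D$ has a non-trivial commutator in $N$, so $\chi(y)=\theta\chi(y)$ with $\theta\ne 1$ forces $\chi(y)=0$, and the second orthogonality relation restricted to $D$ gives the bound; equality then forces $D\le Z(\chi)$ and the central product drops out. You instead run Clifford theory over $C$, using that characters of $C$ nontrivial on $N\supseteq C'$ are fully ramified over $Z(C)$ and that the inertia computation (via the same commutator pairing restricted to $Z(C)$) pins the orbit size at $\Index{Z(C)}{Z(G)}$, yielding the exact formula $\chi(1)=e\,\Index{Z(C)}{Z(G)}\,\theta(1)$. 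This buys you three things: the equality case of item \eqref{lem:reduction_to_maximal_itemthree_new} reads off instantly ($e=1$, $Z(C)=Z(G)$, hence $C\cap D=C\cap C_G(C)=Z(G)$); the claim $b(D)<b$ follows from additivity of breadth across the central product (valid because $N\cap D'=1$ makes $[c,C]$ and $[d,D]$ intersect trivially), replacing the paper's contradiction argument via Lemma~\ref{2nd_lemma_abelian_breadth_1}; and when $C$ is abelian you get the sharper estimate $\Index{G}{D}\le\chi(1)\le p^{d}$ directly, so the $p^{2d-1}$ bound of item \eqref{lem:reduction_to_maximal_itemfive_new} is immediate, whereas the paper must rule out $\Index{G}{D}=p^{2d}$ by deducing $C'\ne 1$ from item \eqref{lem:reduction_to_maximal_itemthree_new}. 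The $p^{b}$ alternative in item \eqref{lem:reduction_to_maximal_itemfive_new} you obtain exactly as the paper does. The only step worth writing out in full is that $Z(D/N)=Z(D)/N$ (a one-line consequence of $N\cap D'=1$), which is what actually makes $D/N$ stem; your appeal to isoclinism covers it but compresses the point the paper makes explicit.
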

\begin{proof}
  The map  $\phi\colon m\mapsto [m,x]$  is an endomorphism  of $Z(M)$.
  Since $M$  is a maximal subgroup  in $G$ we have  that $G=\Span{x}M$
  and that the kernel of $\phi$  is $Z(G)$.  If we compute the breadth
  of                   $x$                   we                   find
  $p^t=  \br(x)=  \Size{[G,x]}  =  \Size{[M,x]}\ge  \Size{[Z(M),x]}  =
  \Size{\phi(Z(M))}  =   \Index{Z(M)}{\ker\phi}  =\Index{Z(M)}{Z(G)}$.
  Note         that          $G'=[x,M]M'$         implies         that
  $\Index{G'}{M'}\le \Size{[x,M]}= p^t$. We have
  % Since $G$ is a stem group and $Z(G)\le G'$, we have
  \begin{align*}
    \Index{Z(M)M'}{M'} &\le  \Index{Z(M)G'}{M'} = \Index{Z(M)G'}{Z(G)G'}\cdot\Index{Z(G)G'}{G'}\cdot  \Index{G'}{M'}   \\
                       &\le  \Index{Z(M)}{Z(G)} \cdot\Index{Z(G)G'}{G'} \cdot  \Index{G'}{M'} \le p^{2t} \cdot \Index{Z(G)G'}{G'}.
  \end{align*}
  If     $G$    is     stem    then     $Z(G)\le    G'$     so    that
  $\Index{Z(M)M'}{M'}\le         p^{2t}$.          This         proves
  item~\ref{lem:reduction_to_maximal_itemone_new}.  \medskip

  Since $[G',Z_2(G)]=1$  and since  $C\le Z_2(G)$  we have  $G'\le D$.
  Also  $[y^p,c]=[y,c]^p=1$ for  all $y\in  G$ and  $c\in C$,  so that
  $\Frat(G)=G^pG'\le D$  and $G/D$  is elementary  abelian. As  $N$ is
  cyclic         of         order         $p$,         the         map
  $cZ(G)\mapsto [\cdot,c]\in  \Hom(G/D,N)\cong G/D$ is easily  seen to
  be an injective homomorphism of  $C/Z(G)$ into $G/D$. Dually the map
  $gD\mapsto  [\cdot,g]\in  \Hom(C/Z(G),N)\cong   C/Z(G)$  defines  an
  injective homomorphism  of $G/D$ into  $C/Z(G)$. We have  shown that
  $\Index{C}{Z(G)}=\Index{G}{D}$.

  As   $\bigcap_{\chi  \in   \Irr(G)}   \ker   \chi=1$  there   exists
  $\chi\in  \Irr(G)$  such  that   $N\nleq  \ker\chi$,  in  particular
  $N\cap    \ker\chi=1$    since    $N$    has    order    $p$.     If
  $1\ne  z\in   N  \le   Z(G)$  then   $\chi(z)=\chi(1)\theta$,  where
  $1\ne  \theta\in \C$  is a  $p$-th root  of $1$.   Let $y\in  G$. If
  $y\notin D$ there  exists $c\in C$ such that $1\ne  [y,c]=z\in N$ so
  that   $\chi(y)=\chi(y^c)=\chi(y[y,c])=\chi(yz)=\theta\chi(y)$.   It
  follows   that   $0=(1-\theta)\chi(y)$,   which   in   turn   forces
  $\chi(y)=0$.
  % Reversing the roles of $c$ and $y$ we have also that $\chi(c)=0$
  % for $c\in C\setminus Z(G)$.
  We  have  shown that  $\chi(y)\ne  0$  implies  $y\in  D$ and  as  a
  consequence
  \begin{equation*}
    1=\frac{1}{\Size{G}}\sum_{y\in G}\chi(y)\overline{\chi(y)}=\frac{1}{\Size{G}}\sum_{y\in D}\chi(y)\overline{\chi(y)} \le  \frac{1}{\Size{G}}\sum_{y\in D}\chi(1)^2=\frac{\Size{D}}{\Size{G}}\chi(1)^2,
  \end{equation*}
  which yields $\Index{G}{D} \le \chi(1)^2$.  This completes the proof
  of item~\ref{lem:reduction_to_maximal_itemtwo_new}.  \smallskip

  We deal  now with item~\ref{lem:reduction_to_maximal_itemthree_new}.
  The previous inequality  is strict if and only  if $D\nleq Z(\chi)$,
  in which case $\Size{\chi(y)}^2 < \chi(1)^2$ for some $y\in D$, thus
  $D\le   Z(\chi)$.   As   above  it   is   easy  to   see  that,   if
  $c\in  C\setminus  Z(G)$,  the  fact  that  $c$  has  a  non-trivial
  commutator in $N$ implies $\chi(c)=0$.  This forces $C\cap D=Z(G)$.
  % Since we are assuming $C\ne Z(G)$, it follows that $1\ne [C,C]=N$
  % so that $C$ is not abelian.
  The            group            $CD$            has            order
  $\Size{C}\Size{D}/\Size{C\cap    D}=   \Size{C}\Size{D}/\Size{Z(G)}=
  \Size{G}$  by   item~\ref{lem:reduction_to_maximal_itemtwo_new},  so
  that $G=CD$.  Since $[C,D]=1$ the group $G$ is a central product and
  we have that  the map $C\times D\to G$ defined  by $(c,d)\mapsto cd$
  is an  epimorphism.  We  deduce that  $\chi$ can  be inflated  to an
  irreducible     character     of     $C\times    D$,     that     is
  $\chi(cd)=\psi(c)\eta(d)$   for   some   $\psi  \in   \Irr(C)$   and
  $\eta\in \Irr(D)$.
  % such that $\psi(c)=\chi(c)/\eta(1)=0$ whenever
  % $c\in C\setminus Z(G)$.  In particular $\psi$ is not linear and
  % $\psi(1)\ge p$.  This gives $\eta(1)=\chi(1)/\psi(1)$.
  We  claim that  $\eta$ is  linear.  Indeed  $D\le Z(\chi)$,  that is
  $\psi(1)^2\eta(1)^2=\chi(1)^2=\chi(1\cdot  d)  \overline{\chi(1\cdot
    d)}=\psi(1)^2\eta(d) \overline{\eta(d)}$ for  all $d\in D$, giving
  $Z(\eta)=D$.  Hence  $D/\ker \eta$ is  cyclic and $D'\le  \ker \eta$
  and $\eta$ is linear. In particular if $\chi(1)\ne 1$ then $\psi$ is
  not   linear   and    $C'\ne   1$.    If   $1\ne    z\in   N$   then
  $\eta(z) =  \epsilon$ and $\psi(z) =  \delta\psi(1)$, where $\delta$
  and $\epsilon$ are  roots of unity, and since $z$,  as an element of
  $C$, is identified in the central  product with $z$ as an element of
  $D$,  we  have $\delta  =  \epsilon$.   If $z\in  \ker(\eta)$,  then
  $\delta =  \epsilon = 1$  and $\chi(z)  = \psi(z) =  \chi(1)$, i.e.\
  $z\in \ker(\chi)$, a contradiction. Thus
  % $\eta(z)=\chi(z)/\psi(1)\ne \chi(1)/\psi(1)=\eta(1)$ so that
  $N\nleq       \ker        \eta$.        It        follows       that
  $N\cap D'\le  N\cap \ker  \eta=1$, as  $N$ is  of order  $p$.  Hence
  $G'=C'D'\le ND'$.

  Assume  now that  $G$  is  stem and  let  $w\in C_D(D/N)$.   Clearly
  $[w,d]\in N\cap D'=1$ for all $d\in D$.  This yields $C_D(D/N)=Z(D)$
  which                          implies                          that
  $Z(D/N)=Z(D)/N \le  Z(G)/N \le  G'/N =ND'/N\le (D/N)'$.   This shows
  that $D/N$ is a stem group which is isoclinic to $D$, since $N$ is a
  central subgroup intersecting $D'$ trivially.

  Let $g\in C\setminus Z(G)$ and let $M^*=C_G(g)$, and assume that $G$
  is stem and $\chi(1)>1$.  Suppose by way of contradiction that there
  exists $h\in  D$ such that  $b_D(h)=b$.  In particular  the equality
  $b_G(h)=b_D(h)$     holds     and     $G=DC_G(h)=M^*C_G(h)$.      By
  Lemma~\ref{2nd_lemma_abelian_breadth_1}      we       have      that
  $N\subseteq [h,G]=  [h, CD]  =[h,D] \subseteq D'$,  a contradiction.
  This   implies   $b(D)<b(G)$,   completing   thus   the   proof   of
  item~\ref{lem:reduction_to_maximal_itemthree_new}.  \medskip

  In order  to prove the  remaining items  we may assume  $C\ne Z(G)$.
  \smallskip

  If $C$  is not abelian  then there exist  $c_1, c_2\in C$  such that
  $1\neq [c_1,c_2]=z\in  N$. Let $g=c_1$, since  $c_2\notin C_G(g)$ we
  have  $\min   \Set{b(x)\mid  x\in  G\setminus  C_G(g)}\le   1$,  and
  item~\ref{lem:reduction_to_maximal_itemfour_new} is proven.

  To prove item~\ref{lem:reduction_to_maximal_itemfive_new} first note
  that if $h\notin D$ then $h$  has a non-trivial commutator in $N$ so
  that $b_{G/N}(hN)=b_G(h)-1$.  Suppose now that $b_{G/N}(hN)=b$, from
  the  argument  above  we  have   that  $h\in  D$.   Let  $K=C_G(h)$,
  $g\in  C\setminus  Z(G)$  and  $M^*=C_G(g)$.  As  in  the  proof  of
  Lemma~\ref{2nd_lemma_abelian_breadth_1},     if    $KM^*=G$     then
  $N\subseteq [G,h]$,  giving $b_{G/N}(hN)=b-1$, a  contradiction.  It
  follows   that  $K\le   D=\bigcap_{g\in   C}C_G(g)$.   This   yields
  $p^b=p^{b(h)}=\Index{G}{K}\ge \Index{G}{D}=\Index{C}{Z(G)}$.

  By  item~\ref{lem:reduction_to_maximal_itemtwo_new}   we  know  that
  $\Index{G}{D}\le \chi(1)^2\le  p^{2d}$ for every $\chi  \in \Irr(G)$
  such  that  $N\nleq  \ker(\chi)$.   Since  there  is  at  least  one
  irreducible character of $G$ whose  kernel does not contain $N$, the
  equality  $\Index{G}{D}=   p^{2d}$  would  imply  that   every  such
  characters  $\chi$ would  have  degree \(p^d>1\)  as  $N\le G'$  and
  \(\ker      \chi\)     does      not     contain      \(N\).      By
  item~\ref{lem:reduction_to_maximal_itemthree_new}   $C'\ne   1$,   a
  contradiction.
\end{proof}

\begin{remark}\label{rem:B1_not_abelian}
  Suppose that there  exist two non-commuting elements $h$  and $k$ of
  breadth $1$. Since  $C_{G}(h)$ is a maximal,  hence normal, subgroup
  of  $G$, it  contains  $[h,k]$  which then  commutes  with $h$  and,
  similarly, with $k$: clearly it  also commutes with every element in
  $L=C_{G}(h)\cap C_{G}(k)$.  As $G$ is generated by $h$, $k$ and $L$,
  the commutator $[h,k]$ is  central and thus $\Span{[h,k]}\le [G,h]$:
  since $[G,h]$ has  size $p$, equality holds and  $[G,h]$ is central,
  that  is $h\in  Z_{2}(G)$. Similarly  $\Span{[h,k]}=[G,k]$, so  that
  \(C_G(G/[G,h])\)  contains both  $h$  and  $k$ and  then  it is  not
  abelian.
\end{remark}

% \begin{remark}\label{rem:B1_not_abelian}
%   Suppose  that \(B_1(G)\) is not  an abelian subgroup of  \(G\), so
%     that  there   exist  two   elements  \(h,k\in   G\)  such   that
%     \([h,k]\ne   1\)   and  \(\br(h)=\br(k)=1\).   The   centralizer
%   \(L=C_G(\Span{h,k})\),  being the intersection of  the two maximal
%   subgroups  which are  the centralizers  of \(h\)  and \(h\),  is a
%    normal subgroup  of index  \(p^2\). In  particular \(G/L\)  is an
%                   abelian                  group                 and
%   \(\Span{[h,k]}=  [G,h]\le L\cap  \Span{h,k}\le Z(G)\).  Thus every
%   element  \(h\) of  breadth \(1\)  such that  \(h\notin Z(B_1(G))\)
%   belongs to \(Z_2(G)\). Moreover the subgroup \(C=C_G(G/[G,h])\) is
%   not abelian and  \(C'=[G,h]\). Conversely if \(C=C_G(G/[G,h])\) is
%   not abelian for some element \(h\in Z_2(G)\) of breadth \(1\) then
%   \(C\le B_1(G)\), and \(B_1(G)\) is not abelian.
% \end{remark}

\begin{proposition} \label{prop:bounds} Let $H$ be a stem $p$-group of
  breadth-degree   type  $(b,d)$,   with  $b\ge   2$,  and   of  order
  $|H|=p^k$. Suppose that $G$ is any  stem quotient of $H$ of order at
  least       $\Size{G}       \ge      p^{k-b+1}$       and       that
  \(S=\Set{\smash{g\in Z_2(G)  \mid \br(g)=1}}\)  is not  empty (\(G\)
  exists by  Lemma~\ref{lem:genbound_first_reduction}).  For  $g\in S$
  set     $N_g=[g,G]$,     $C_g=C_G(G/N_g)$,    \(M_g=C_G(g)\)     and
  $D_g=C_G(C_g)$. %One (and only one) of the following cases occurs.
  % \begin{enumerate}
  % \item \label{bound_one} \(B_1(G)\) is not abelian and for some
  %   \(g\in S\) \begin{enumerate}
  %  	\item the subgroup \(C_g\) is not abelian,
  %  	\item $1\ne \Index{G}{D_g}= \chi(1)^2 \le p^{2d}$ for some
  %     $\chi\in \Irr(G)$ such that $N_g\nleq \ker \chi$.
  %   \end{enumerate} In this case $ p^k\le p^{b+2d} \sigma(b-1,d)$.
  % \item \(B_1(G)\) is not abelian and
  %   $\Index{G}{D_g}\ne \chi(1)^2\le p^{2d}$ for all
  %   $\chi\in \Irr(G)$ such that $N_g\nleq \ker \chi$ and for all
  %   \(g\in S\) such that \(C_g\) is not abelian.  In this case
  %   $p^{k}\le p^{b+2d}\sigma(b-1,d)$.
  % \item \label{bound3}$B_1(G)$ is abelian and
  %   \[p^k \le
  %     \begin{cases}
  %       p^{b+2d-1}\sigma(b-1,d) & \text{if
  %       \(\min_{g\in S}\br(G/N_g)\le b-1\),} \\
  %       p^{3b-1+\min\{b,2d-1\}} \sigma(b-1,d) & \text{if
  %    	  \(\min_{g\in S} \br(G/N_g)=b\). }
  %     \end{cases}
  %   \]
  % \end{enumerate}
  \begin{enumerate}
  \item   \label{bound_one}  If   \(B_1(G)\)  is   not  abelian   then
    $ p^k\le p^{b+2d} \sigma(b-1,d)$.
  \item \label{bound3}If $B_1(G)$ is abelian then
    \[p^k \le
      \begin{cases}
        p^{b+2d-1}\sigma(b-1,d) & \text{if \(\min_{g\in S}\br(G/N_g)\le b-1\),} \\
        p^{3b-2}      \sigma(b-1,d)     &      \text{if
          \(\min_{g\in S} \br(G/N_g)=b\). }
      \end{cases}
    \]
  \end{enumerate}
  In particular \(p^k\le p^{3b+2d-2}\sigma(b-1,d)\) in each of the two
  cases if \(b \ge 2\) and \(d \ge 1\).

%
%
  % For any given pair $(b,d)\in \N\times \N$ the size $\sigma(b,d)$
  % of the breadth-degree type $(b,d)$ is finite and
%
%\begin{equation*}
%  \sigma(b,d)\le
%  \begin{cases}
%    p^{2b^2+b+2d-2} & \text{for $b\le 2d-1$} \\ p^{\frac{1}{2}(3b^2+b(4d-1)-4d^2+10d-6)}& \text{for $b> 2d-1$}\\
%  \end{cases}
%\end{equation*}
%%
%% $\sigma(b,d) \le p^{\frac{3b^2+\br(4d+1)-2}{2}}$ (where $b\ge 1$).
\end{proposition}

\begin{proof}
  In the rest  of the proof, for  the sake of brevity  of notation, we
  shall  use the  letters  \(N\),  \(M\), \(D\)  and  \(C\) to  denote
  respectively  \(N_g\),  \(M_g\),  \(D_g\) and  \(C_g\)  when  \(g\in
  S\). We already saw that $N$ is  a minimal normal subgroup of $G$ of
  order $p$ and  that $M\lessdot G$ is a maximal  subgroup.  Note that
  that $C\ne Z(G)$ as \(g\in C\setminus Z(G)\).

  We  now   apply  item~\ref{lem:reduction_to_maximal_itemone_new}  of
  Lemma~\ref{lem:reduction_to_maximal_new} to the group $G/N$. We have
  \begin{equation}\label{eq:missing_to_stem}
    \begin{split}
      \Index{Z(M/N)(M/N)'}{(M/N)'}& \le p^{2t} \cdot\Index{Z(G/N)(G/N)'}{(G/N)'}\\
      & = p^{2t} \cdot\Index{Z(G/N)}{\bigl(Z(G/N)\cap (G/N)'\bigr)}\\ &=p^{2t} \cdot\Index{C/N}{\bigl((C/N)\cap (G/N)'\bigr)}\\ & = p^{2t} \cdot\Index{C}{ (C\cap G')} \\
      &\le p^{2t} \cdot \Index{C}{Z(G)}.
    \end{split}
  \end{equation}
  where $t=\min\Set{b_{G/N}(xN)  \mid x\in  G\setminus M} \le  b-1$ by
  Lemma~\ref{2nd_lemma_abelian_breadth_1}.
	
  Let  us start  with  the case  when \(B_1(G)\)  is  not abelian.  By
  Remark~\ref{rem:B1_not_abelian}, there  exists \(g\in S\)  such that
  \(C\) is not abelian.

  % Indeed if \(C_g\) is not abelian then \(N_g=C_g'\) so that
  % \(C_g\le B_1(G)\), which in turn implies that \(B_1(G)\) is not
  % abelian.  Conversely if \(B_1(G)\) is not abelian and \(h,k\) are
  % two non-commuting elements of breadth \(1\) then \(G\) is the
  % central product of \(U=\Span{h,k}\) and \(V=C_G(h,k)\).  The
  % subgroup \(U\cap V\) is then central and contains \([h,k]\) as
  % \(V\) is the intersection of the two maximal subgroups \(C_G(h)\)
  % and \(C_G(k)\). Note that \(\br(h)=\br(k)=1\) implies that \(h\)
  % and \(k\) are central modulo \(N_h=\Span{[h,k]}\) so that
  % \(h\in S\) and \(C_h\) is not abelian.

  Suppose  first that  for  some \(g\in  S\) such  that  \(C\) is  not
  abelian  and some  $\chi\in \Irr(G)$  such that  $N\nleq\ker\chi$ we
  have    $1\ne    \Index{G}{D}=    \chi(1)^2    \le    p^{2d}$.    By
  item~\ref{lem:reduction_to_maximal_itemthree_new}                 of
  Lemma~\ref{lem:reduction_to_maximal_new}, the quotient $D/N$ is stem
  and                $1\ne                 N=C'$.                 Also
  $p^{k-b+1}/(p^{2d}\cdot p)\le  \Size{G}/(\Size{G/D}\cdot \Size{N}) =
  \Size{D/N} \le \sigma(b-1,d)$, which implies that
  \begin{equation*}
    p^k\le p^{b+2d} \sigma(b-1,d)
  \end{equation*}
  as claimed.

  Suppose now that for every \(g\in S\) such that \(C\) is not abelian
  and  every  $\chi\in \Irr(G)$  such  that  $N\nleq\ker\chi$ we  have
  $\Index{G}{D}\ne                \chi(1)^2\le                p^{2d}$.
  Item~\ref{lem:reduction_to_maximal_itemtwo_new}                   of
  Lemma~\ref{lem:reduction_to_maximal_new}         implies        that
  $\Index{C}{Z(G)}=\Index{G}{D}             \le             p^{2d-1}$.
  Lemma~\ref{2nd_lemma_abelian_breadth_1}  yields   $\br(M/N)<b$.   As
  \( C\nleq M\) then $\br_{G/N}(yN)=0$ for some \(y\in G\setminus M\),
  so that $t=0$.  We also have $\Size{M/N}=\Size{G}/p^2\ge p^{k-b-1}$.
  Applying \eqref{eq:missing_to_stem} with $t=0$ we find
  \begin{equation*}
    \Index{Z(M/N)(M/N)'}{(M/N)'} \le p^{2t} \cdot \Index{C}{Z(G)}\le  p^{2d-1} .
  \end{equation*}
  % where, the last two inequalities depend on the fact that $\bar G$
  % is stem, in particular we can use
  % item~\ref{lem:reduction_to_maximal_itemtwo_new} of
  % Lemma~\ref{lem:reduction_to_maximal_new}.
  By   Remark~\ref{rem:isoclinicgroupsizes},   the  group   $M/N$   is
  isoclinic to a stem group of order at least %p^{k-3b-2d-1}
  % the stem group %$T$
  % in the isoclinism class of $M/N$ has size
  \begin{equation*}
    \dfrac{\Size{M/N}}{\Size{Z(M/N)(M/N)'/(M/N)'}}\ge \dfrac{\Size{M/N}}{p^{2d-1}} \ge p^{k-b-1}/p^{2d-1}=p^{k-b-2d}.
  \end{equation*}
  Hence, as claimed, also in this case we have
  \begin{equation*}
    p^{k}\le p^{b+2d}\sigma(b-1,d).
  \end{equation*}

  Finally we have  to deal with the case when  $B_1(G)$ is abelian. If
  \(g\in   S\)   then  $C$   is   abelian   and   we  can   use   item
  \ref{lem:reduction_to_maximal_itemfive_new}                       of
  Lemma~\ref{lem:reduction_to_maximal_new}.           We          have
  $\Index{C}{Z(G)}=\Index{G}{D}        \le         p^{2d-1}$        as
  $\Index{G}{D}\ne  \chi(1)^2$ for  all  $\chi\in  \Irr(G)$ such  that
  $N\nleq  \ker  \chi$.  Suppose  first  that  $\br(G/N)\le b-1$.   By
  Remark~\ref{rem:isoclinicgroupsizes} the group $G/N$ is isoclinic to
  a stem group whose order is
  \begin{multline*}
    \frac{\Size{G/N}}{\Size     {Z(G/N)/(Z(G/N)\cap    (G/N)')}}     =
    \frac{\Size{G/N}}{\Size{C/(C\cap G')}}
    \ge \frac{\Size{G/N}}{\Size{C/ Z(G)}} \\
    \ge p^{k-b}/p^{2d-1}=p^{k-b-2d+1},
  \end{multline*}
  this gives
  \begin{equation*}
    p^k\le p^{b+2d-1}\sigma(b-1,d)
  \end{equation*}
  when $\br(G/N)\le b-1$.

  The     other    possibility     is    that     $b(G/N)=b$.    In this case there exists \(y\in G\) such that \(b=\br_G(y)=\br_{G/N}(yN)\). From Lemma~\ref{lem:genbound_first_reduction} it follows that \(y\in M\) and that \(\br(M/N)\le b-1\). As a consequence either \([M,y]\supseteq N\) or \(\br_M(y)=b-1\). Since \([G,y]\) does not contain \(N\), the only possible case is the latter one, thus \(\br(M)= b-1\) and, by item~\ref{lem:reduction_to_maximal_itemone_new} of Lemma~\ref{lem:reduction_to_maximal_new}, 
   \begin{equation*}
  	\dfrac{\Size{M}}{\Size{Z(M)M'/M'}}\ge
  	p^{k-b}/p^{2(b-1)}=p^{k-3b+2}.
  \end{equation*}
		Hence \begin{equation*}
			p^k \le p^{3b-2} \sigma(b-1,d).
		\end{equation*}
	and item~\ref{bound3} follows.
\end{proof}
\begin{remark}
  It  is easy  to see  that a  stem $p$-group  of breadth-degree  type
  $(1,d)$ is extra-special of  order $p^{2d+1}$. Indeed it has derived
  subgroup of  order $p$ containing  the center $Z(G)$.   This implies
  $Z(G)=G'=\Frat(G)$. Hence $G$ has order $p^{2n+1}$ and its character
  degrees are $1$ and $p^n$.  As a consequence $\sigma(1,d)=p^{2d+1}$.
\end{remark}
\begin{theorem}\label{thm:main}
  For any given pair $(b,d)\in  \N\times \N$ the size $\sigma(b,d)$ of
  the     breadth-degree     type     $(b,d)$    is     finite     and
  $\sigma(b,d)\le p^{\frac{b(3b+4d-1)}{2}}$.
\end{theorem}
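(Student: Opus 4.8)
The plan is to prove the inequality $\sigma(b,d)\le p^{b(3b+4d-1)/2}$ by induction on $b$; finiteness is then automatic, since a supremum of group orders all bounded above by a fixed power of $p$ takes only finitely many values and so is attained. For the base cases, a stem $p$-group of breadth $0$ is abelian, hence trivial, so $\sigma(0,d)=1=p^{0}$, while the remarks following Definition~\ref{def:maxsize_new} give $\sigma(1,d)=p^{2d+1}=p^{1\cdot(3+4d-1)/2}$; both match the claimed formula.

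For the inductive step I would fix $b\ge 2$ and assume the bound for $b-1$ and every $d$. Since $\sigma(b,d)$ is by definition the supremum of $\Size{H}$ over stem $p$-groups $H$ with $\dt(H)=(b',d')$, $b'\le b$ and $d'\le d$, it suffices to bound each such $\Size{H}$ by $p^{b(3b+4d-1)/2}$. If $b'\le 1$ then $\Size{H}\le p^{2d'+1}\le p^{2d+1}\le p^{b(3b+4d-1)/2}$, so I may assume $2\le b'\le b$; then $H$ is non-abelian, $d'\ge 1$, and Lemma~\ref{lem:genbound_first_reduction} supplies a stem quotient $G$ of order at least $\Size{H}/p^{b'-1}$ whose second centre contains a breadth-$1$ element. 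This is precisely the hypothesis of Proposition~\ref{prop:bounds} applied with $(b,d)$ replaced by $(b',d')$.

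The crux is that every branch of Proposition~\ref{prop:bounds} yields a bound $\Size{H}\le p^{e}\,\sigma(b'-1,d')$ whose overhead satisfies $e\le 3b'+2d'-2$. Indeed, item~\eqref{bound_one} gives $e=b'+2d'$ and the second item gives $e=b'+2d'+2$, both at most $3b'+2d'-2$ since $b'\ge 2$; the third item with $\br(G/N)\le b'-1$ gives $e=b'+2d'-1$; and the third item with $\br(G/N)=b'$ gives $e=3b'-1+\min\{b',2d'-1\}$, which equals $4b'-1\le 3b'+2d'-2$ when $b'\le 2d'-1$ and equals $3b'+2d'-2$ otherwise, so in every subcase the case hypothesis is exactly the inequality that makes the target. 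As $b'\le b$ and $d'\le d$ this gives $e\le 3b+2d-2$, and the inductive hypothesis together with the monotonicity of $\sigma$ gives $\sigma(b'-1,d')\le\sigma(b-1,d)\le p^{(b-1)(3b+4d-4)/2}$. Combining these with the identity
\[
  (3b+2d-2)+\frac{(b-1)(3b+4d-4)}{2}=\frac{b(3b+4d-1)}{2}
\]
yields $\Size{H}\le p^{b(3b+4d-1)/2}$; taking the supremum over all admissible $H$ closes the induction.

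The step I expect to need the most care is the verification behind the previous paragraph. First one must confirm that the branches of Proposition~\ref{prop:bounds} are exhaustive for the chosen $G$: if $B_1(G)$ is abelian the character equality of item~\eqref{bound_one} cannot occur, since it would force $C'\ne 1$, so the third item applies, whereas if $B_1(G)$ is non-abelian one lands in item~\eqref{bound_one} or the second item according as that equality holds or fails. Second, one must check each overhead against $3b'+2d'-2$; the binding cases are those with $\br(G/N)=b'$, where the term $\min\{b',2d'-1\}$ conspires with the case distinction to reach the target exponent exactly. Everything else reduces to the monotonicity of $\sigma$ and the displayed arithmetic identity, which are routine.
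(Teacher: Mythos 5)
Your proof is correct and follows the paper's own route: it derives the recursion $\sigma(b,d)\le p^{3b+2d-2}\sigma(b-1,d)$ by checking that every branch of Proposition~\ref{prop:bounds} has overhead at most $3b+2d-2$, and then closes the induction using $\sigma(1,d)=p^{2d+1}$. Your extra care about the quantification over $(b',d')$ with $b'\le b$, $d'\le d$ and about the exhaustiveness of the branches only makes explicit what the paper leaves implicit.
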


\begin{proof}
  Proposition~\ref{prop:bounds}                                 yields
  $  \sigma(b,d)\le  p^{3b+2d-2}  \sigma(b-1,d)$   for  $b\ge  2$  and
  $d\ge 1$. By induction it follows that
  \begin{equation*}\label{eq:upper_bound}
    \sigma(b,d)\le p^{\frac{3b^2+4db-4d-b-2}{2}}\sigma(1,d)= p^{\frac{b(3b+4d-1)}{2}}.
  \end{equation*}
  since $\sigma(1,d)=p^{1+2d}$ as we already noted.
\end{proof}

% \begin{remark}\label{better_upper_bound}
%   Proposition~\ref{prop:bounds} actually shows that
%   \begin{equation*}
%    \sigma(b,d)\le
%    \begin{cases}
%      p^{b+2d+2}\sigma(b-1,d) & \text{for $2\le b \le \left\lfloor\frac{2}{3}d+1\right\rfloor$}\\
%      p^{4b-1}\sigma(b-1,d) & \text{for $\left\lfloor\frac{2}{3}d+1\right\rfloor < b \le 2d-1$} \\
%      p^{3b+2d-2}\sigma(b-1,d) & \text{for $ b > 2d-1$}
%    \end{cases}
%  \end{equation*}
%  Arguing induction we then find the more accurate upper bound
%  \begin{equation*}
%    \sigma(b,d)\le
%    \begin{cases}
%      p^{\frac{b^2+4bd+5b-4}{2}} & \text{for $2\le b \le \left\lfloor\frac{2}{3}d+1\right\rfloor$}\\
%      p^{\frac{6b^2+2d^2+9b+15d+6}{3}} & \text{for $\left\lfloor\frac{2}{3}d+1\right\rfloor < b \le 2d-1$} \\
%      p^{\frac{9b^2+12bd-8d^2-3b+72d-6}{6}} & \text{for $ b > 2d-1$}
%    \end{cases}
%  \end{equation*}
%\end{remark}

% \begin{remark}\label{rem:upper_bound}
%   The   proof   of   Theorem~\ref{thm:main}   shows   that   letting
%   $\Index{C}{Z(G)}\le      2d-1$     and      $t\le     b-1$      in
%   \eqref{eq:missing_to_stem}         it         follows         that
%   $ \sigma(b,d)\le  p^{3b+2d-2} \sigma(b-1,d)$.  This  inequality is
%   indeed satisfied  in all of  the cases  treated in the  proof.  It
%   follows that
%   \begin{equation}\label{eq:upper_bound}
%    \sigma(b,d)\le p^{\frac{3b^2+4db-4d+b-4}{2}}\sigma(1,d)\le p^{\frac{3b^2+b(4d+1)-2}{2}}
%  \end{equation}
%  is another  valid bound which,  although less accurate, has  a more
%  compact form.
%\end{remark}
%
%\begin{remark}\label{rem:unbounded_size}
\section{Examples and lower bounds}\label{sec:unbounded_size}
% In this section $p$ will denote an odd prime.
%
Consider  the  additive group  $H=\Z_p[\theta]$  of  the ring  of  the
$p$-adic integers extended by a $p$-th root of unity $\theta\ne 1$ and
let  $g$ be  the  automorphism of  order  $p$ of  $H$  induced by  the
multiplication  by $\theta$.   It is  well known  that the  semidirect
product  $M_\infty=\Span{g}\ltimes H$  is a  pro-$p$ group  of maximal
class,  i.e.\ any  normal subgroup  of finite  index of  $M_\infty$ is
either       a       maximal       subgroup      or       a       term
$\gamma_i(M_\infty)=(\theta-1)^{i-1}H$ of the  lower central series of
$M_\infty$ (see also  \cite[Example 7.4.14]{Leedham-Green2002}).  Also
the finite quotient  $M_i=M_{\infty}/\gamma_i(M_\infty)$ is of maximal
class  and has  order $\Size{M_i}=p^i$.   Since $M_i$  has an  abelian
maximal subgroup $N$, the character degrees  of $M_i$ are $1$ and $p$.
It is  also easy to  see that if $x\notin  N$ the $x^{M_i}=x  M_i'$ so
that $\br(M_i)=  \log_p(\Size{M_{i}'})=i-2$.  We  have that  $M_i$ has
breadth-degree type $(i-2,1)$ for $i\ge 3$.  Let $b\ge d$ and consider
the group  $T_{b,d}=M_{b-d+3}\times \prod_{j=1}^{d-1}E$, where  $E$ is
an  extra-special group  of  order  $p^3$.   It's  easy to  see  that
$T_{b,d}$ is stem  of order $p^{b+2d}$ and that  it has breadth-degree
type $(b,d)$.  Hence $\sigma(b,d)\ge p^{b+2d}$ for $b \ge d$.

Let $ H_{p^d}$ be the group of the unitriangular $3 \times 3$ matrices
over the Galois field $\F_{p^d}$ and $N\le Z(H_{p^d})$ be any subgroup
of order $p^{d-b}$  of its center.  The  quotient $H_{p^d}/N$ provides
an example of  a stem $p$-group of breadth-degree type  $(b,d)$ and of
order  $p^{b+2d}$. Hence  $\sigma(b,d)\ge p^{b+2d}$  also in  the case
$d > b$.

It follows that for every given $b$ and $d$ we have
\begin{equation}\label{eq:notsuitable}
  \sup_{s\ge 1}\sigma(s,d)=\sup_{t\ge 1}\sigma(b,t)=\infty.
\end{equation}

When  $p$  is  odd  another  interesting example  is  the  stem  group
$\mathcal{F}_{b+1}=F/\gamma_3(F)F^p$                             where
$F=\Span{x_1,\ldots,   x_{b+1}}$   is   the  free   group   on   $b+1$
generators. A  direct computation  shows that  $\mathcal{F}_{b+1}$ has
breadth           $b$          and           derived          subgroup
$\mathcal{F}_{b+1}'=Z(\mathcal{F}_{b+1})$  of  maximum possible  order
$p^{\frac{b(b+1)}{2}}$.                                            Let
$d=\left\lfloor \frac{b+1}{2}\right\rfloor$.   On the one  hand, since
the  square   of  the   degree  of   any  irreducible   characters  of
$\mathcal{F}_{b+1}$  divides   the  index $p^{b+1}$  of the  center
$Z(\mathcal{F}_{b+1})$      \cite[Corollary~2.30]{Isaacs1994},     the
representation exponent  of the  group $\mathcal{F}_{b+1}$ is  at most
$d$.   On the  other hand,  the  group $H_{p^{d}}  $ is  a quotient  of
$\mathcal{F}_{b+1}$,   so   that   the  representation   exponent   of
$\mathcal{F}_{b+1}$ is at least $d$. We deduce that the representation
exponent        of        $\mathcal{F}_{b+1}$        is        exactly
$d={\left\lfloor \frac{b+1}{2}\right\rfloor}$.  This  yields the lower
bound
\[\sigma\left(b,\left\lfloor \frac{b+1}{2}\right\rfloor\right)\ge
  \Size{\mathcal{F}_{b+1}}   =    p^{\frac{b^2+3b+2}{2}}\text{\   ($p$
    odd)}.\]   Since   $d'\le   d$    and   $b'\le   b$   imply   that
$\sigma(b',d')\le \sigma(b,d)$, for $b\le 2d-1$ we have that
\begin{equation*}
  p^{\frac{b^2+3b+2}{2}} \le \sigma(b,d) \le  p^{\frac{b(3b+4d-1)}{2}}\le p^{10d^2-9d+2}\text{\  ($p$ odd)},
\end{equation*}
whereas for $b\ge 2d$ we have
\begin{equation*}
  p^{d(2d+1)}\le p^{\frac{(2d-1)^2+3(2d-1)+2}{2}}\le \sigma(b,d) \le p^{\frac{b(3b+4d-1)}{2}} \le p^{\frac{5b^2+b}{2}} \text{\  ($p$ odd).}
\end{equation*}

% \end{remark}

\bibliographystyle{plain}
\bibliography{breadth_degree}

% \affiliationone{% in this example, two authors share an institution
% Norberto Gavioli\\
% Dipartimento di Ingegneria e Scienze dell'Informazione e Ma\-te\-ma\-ti\-ca\\
% Universit\`a degli Studi dell'Aquila\\
% Via Vetoio\\
% I-67100 Coppito (L'Aquila)\\
% Italy \email{norberto.gavioli@univaq.it}}
%% Important: Do not put any empty line here.
% \affiliationtwo{% in this example, two authors share an institution
% Avinoam Mann\\
% Einstein Institute of Mathematics\\
% The Hebrew University of Jerusalem\\
% Manchester House 205\\
% Givat         Ram.         Jerusalem,         9190401,        Israel
% \email{avinoam.mann@mail.huji.ac.il}}
% \affiliationthree{Valerio Monti\\
% Dipartimento di Scienza e Alta Tecnologia\\
% Universit\`a degli Studi dell'Insubria\\
% Via Valleggio, 11 \\ I-22100 Como\\
% Italy  \email{valerio.monti@uninsubria.it} }  Important: Do  not put
% any  empty  line  here.   Use \affiliationthree{}  for  any  address
% positioned  under  \affiliationone  Use \affiliationfour{}  for  any
% address positioned under \affiliationtwo

\end{document}